\newcommand{\mm}{\mathfrak{m}}
\newcommand{\qq}{\mathfrak{q}}
\newcommand{\bb}{\mathfrak{b}}
\newcommand{\afrak}{\mathfrak{a}}
\newcommand{\height}{\operatorname{ht}}
\newcommand{\Min}{\operatorname{AssMin}}
\newcommand{\rad}{\operatorname{rad}}
\newcommand{\ellnew}{l}
\newtheorem*{main}{Main Theorem}
\newtheorem{theorem}{Theorem}
\newtheorem{corollary}{Corollary}
\newtheorem{claim}{Claim}
\newtheorem{lemma}{Lemma}
\theoremstyle{remark}
\newtheorem{example}{Example}
\theoremstyle{remark}
\newtheorem{remark}{Remark}
\title{On the effective reduction of an ideal}
\author{Tomasz Rodak, Adam Różycki, Stanisław Spodzieja}
\date{22 June 2021}
\begin{document}
\maketitle

\abstract{It is well known that in the Noetherian local ring with
  infinite residue field the reduction of $\mm$-primary ideal may
  be given in the form of a sufficiently general linear combination of
  its generators. 
  In the paper we give a condition for the existence of such
  reduction in terms of the sum of degrees of the ideal fiber cone prime
  divisors in the case of any
  Noetherian local ring.}

\section{Introduction}

Let $R$ be a commutative ring with unity and let $\afrak$ be its ideal. We
say that ideal $\bb\subset\afrak$ is a \emph{reduction} of $\afrak$ 
if there exists positive integer $k$ such that
\begin{equation*}
  \afrak^{k+1}=\bb\afrak^k.
\end{equation*}
Obviously, ideal $\afrak$ is always a reduction of $\afrak$. In
practice it is often important to know if an ideal has a reduction which is in
some sens smaller then the ideal itself.

Assume now that $(R,\mm)$ is a Noetherian local ring. The \emph{fiber
  cone} of ideal $\afrak\subset R$ is the ring
\begin{equation*}
  \mathcal{F}_\afrak(R)=\frac{R}{\mm}\oplus\frac{\afrak}{\mm \afrak}\oplus\frac{\afrak^2}{\mm \afrak^2}\oplus\cdots
\end{equation*}
The ring $\mathcal{F}_\afrak(R)$ is a finitely generated graded ring
over the residue field $R/\mathfrak{m}$ by elements of degree one.
The Krull dimension of $\mathcal{F}_\afrak(R)$ is called \emph{analytic
  spread} of $\afrak$ and is denoted $\ell(\afrak)$.
It is well known that the minimal number of generators in a reduction
of $\afrak$ is bounded below by $\ell(\afrak)$ (see \cite[Corollary
8.2.5]{huneke}). Moreover, if the residue field $R/\mm$ is infinite,
then $\afrak$ has a reduction generated by $\ell(\afrak)$ elements and
every such reduction is minimal with respect to inclusion (see
\cite[Proposition 8.3.7]{huneke}). From \cite[Corollary 8.3.9]{huneke}
we have $\height\afrak\leqslant\ell(\afrak)\leqslant\dim R$. Hence, if $\afrak$
is a primary ideal belonging to $\mm$, then $\ell(\afrak)$ is equal to
the Krull dimension $\dim R$ of $R$. The following theorem is well
known (see \cite[Theorem 14.14]{matsumura}).

\begin{theorem}
  Let $(R,\mathfrak{m})$ be an $n$-dimensional Noetherian local ring,
  and suppose that $R/\mathfrak{m}$ is an infinite field; let
  $\qq=(u_1,\ldots,u_m)R$ be an $\mm$-primary ideal. Then if
  $y_i=\sum a_{ij}u_j$ for $1\leqslant i\leqslant n$ are $n$
  'sufficiently general' linear combinations of $u_1,\ldots,u_m$, the
  ideal $\bb=(y_1,\ldots,y_n)R$ is a reduction of $\qq$ and
  $y_1,\ldots,y_n$ is a system of parameters of $R$.
\end{theorem}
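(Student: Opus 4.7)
\bigskip

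\noindent\textbf{Proof proposal.} The plan is to translate the problem into the fiber cone and build a homogeneous system of parameters consisting of linear forms, then transfer the conclusion back to $R$ via Nakayama's lemma.

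Let $k=R/\mm$ and write $A=\mathcal{F}_\qq(R)=\bigoplus_{j\geqslant 0}\qq^j/\mm\qq^j$. By the assumption that $\qq$ is $\mm$-primary we have $\ell(\qq)=\dim A=n$, and $A$ is generated as a $k$-algebra by $\bar{u}_1,\ldots,\bar{u}_m\in A_1$, the images of $u_1,\ldots,u_m$. I would first argue that it suffices to produce $\bar{y}_1,\ldots,\bar{y}_n\in A_1$, of the form $\bar{y}_i=\sum_j a_{ij}\bar{u}_j$ with $a_{ij}\in k$, which form a homogeneous system of parameters of $A$. The "sufficiently general" condition will be spelled out as: the tuples $(a_{ij})$ avoid a certain proper Zariski-closed subset of $k^{nm}$ determined by the minimal primes of $A$.

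Next I would construct the $\bar y_i$ inductively, using graded prime avoidance. After $\bar{y}_1,\ldots,\bar{y}_{i-1}$ have been chosen so that $\height(\bar{y}_1,\ldots,\bar{y}_{i-1})=i-1$, let $\mathfrak{p}_1,\ldots,\mathfrak{p}_s$ be the minimal primes of $(\bar{y}_1,\ldots,\bar{y}_{i-1})$ of height $i-1$. Since $i-1<n=\dim A$, none of the $\mathfrak{p}_t$ contains the irrelevant ideal $A_+$, so each $\mathfrak{p}_t\cap A_1$ is a proper $k$-subspace of $A_1$. Because $k$ is infinite, $A_1$ is not the union of finitely many proper subspaces, so we can pick $\bar{y}_i\in A_1\setminus\bigcup_t\mathfrak{p}_t$; equivalently the coefficient vector $(a_{i1},\ldots,a_{im})$ avoids a finite union of proper subspaces of $k^m$, a generic condition. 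After $n$ steps we have a homogeneous system of parameters of $A$.

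Then I would pass back to $R$. Having $(\bar y_1,\ldots,\bar y_n)$ a system of parameters in $A$ means the graded quotient $A/(\bar y_1,\ldots,\bar y_n)$ is Artinian, hence zero in all sufficiently large degrees. Unwinding the degree-$(k+1)$ piece gives
\begin{equation*}
  \qq^{k+1}=\bb\qq^k+\mm\qq^{k+1}
\end{equation*}
for some $k$, where $\bb=(y_1,\ldots,y_n)R$. Applying Nakayama's lemma to the finitely generated $R$-module $\qq^{k+1}/\bb\qq^k$ yields $\qq^{k+1}=\bb\qq^k$, so $\bb$ is a reduction of $\qq$.

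Finally I would note that since $\bb$ is a reduction of the $\mm$-primary ideal $\qq$, we have $\rad\bb=\rad\qq=\mm$, so $\bb$ is $\mm$-primary and $y_1,\ldots,y_n$ is a system of parameters of $R$. The main obstacle is the inductive construction of the $\bar y_i$: one must be careful that the height-jumping property really is controlled by avoiding the degree-$1$ parts of finitely many minimal primes, and that a generic choice of coefficients over an infinite field achieves this avoidance — this is exactly where the hypothesis that $R/\mm$ is infinite and the graded structure of $A$ (generation in degree one) are both essential.
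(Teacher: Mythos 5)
Your proposal is correct and is essentially the standard (Matsumura) argument that the paper itself invokes without proof: pass to the fiber cone $\mathcal{F}_\qq(R)$, use graded prime avoidance over the infinite field $R/\mm$ to select linear forms giving a homogeneous system of parameters, and return to $R$ via Nakayama — exactly the reduction criterion (the paper's Claim 1) that drives its Main Theorem, with prime avoidance playing the role that the counting argument with independent linear forms plays there. The one step worth making explicit is that once $\operatorname{ht}(\bar y_1,\ldots,\bar y_{i-1})=i-1$, Krull's height theorem forces \emph{every} minimal prime of that ideal to have height exactly $i-1$, so avoiding "the minimal primes of height $i-1$" really does mean avoiding all of them and the height increments as claimed.
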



In this paper
we give an estimation for the number of linearly independent linear
forms which provide a reduction in the spirit of the above theorem for
any $\mm$-primary ideal with sufficiently small sum of the degrees of
prime divisors.

Before we state our main result, we give a definition.

Let $k$ be a field and let $R$ be a Noetherian graded $k$-algebra
finitely generated over $k$ by elements of degree one. Then we set
\begin{equation*}
  \operatorname{deg.rad}R:=\sum_{\mathfrak{p}\in\Min R}\deg R/\mathfrak{p},
\end{equation*}
where $\Min R$ denotes the set of minimal prime ideals in $R$.

The number $\operatorname{deg.rad}R$ is an algebraic counterpart of
the following notion introduced in the setting of complex algebraic
geometry (see \cite[VII.11.8]{lojasBook}). Let 
$V\subset\mathbb{P}^m$ be an algebraic set 
in a complex projective space $\mathbb{P}^m$. If we set
$\delta(V)=\sum\deg V_i$, where $V=\bigcup V_i$ is a decomposition into
irreducible components, then $\delta(V)=\operatorname{deg.rad}R$,
where $R=k[V]$.

Let $\qq=(u_1,\ldots,u_m)R$, where $(R, \mm)$ is a Noetherian local
ring. Then
\begin{equation}
  \mathcal{F}_\qq(R)\simeq\frac{\frac{R}{\mm}[X_1,\ldots,X_m]}{Q},\label{eq:isomorphism_Fq}
\end{equation}
 where $Q$ is the kernel of the
 homomorphism $\frac{R}{\mm}[X_1,\ldots,X_m]\to\mathcal{F}_\qq(R)$
 given by $X_i\mapsto u_i + \mm\qq$.
We will write $\bar P(X)$ for
the image in $\frac{R}{\mm}[X]$ of a polynomial $P(X)\in R[X]$. Observe, that
for a homogeneous polynomial $P(X)\in R[X]$ of degree $k$ we have
$\bar P(X)\in Q$ if and only if $\bar P(X)$ is a null-form of $\qq$,
i.e. $P(u_1,\ldots,u_m)\in\mm\qq^k$.

Let $k$ be a field. We will say that a sequence of linear forms
$N_1,\ldots,N_s\in k[X_1,\ldots,X_m]$, $s\geqslant m$ is independent if 
for any $1\leqslant i_1<\cdots<i_m\leqslant s$ the system
$N_{i_1},\ldots,N_{i_m}$ is linearly independent.

Our main result is

\begin{main}\label{thm:main_theorem}
  Let  $m$, $n$, $d$ be some positive integers, where $m\geqslant n$.
  Set $\ellnew=d(m-n)+n$. Let $(R,\mm)$ be a local Noetherian $n$-dimensional ring and let
  $\bar N_j\in \frac{R}{\mm}[X_1,\ldots,X_m]$, $j=1,\ldots,\ellnew$ be a sequence of independent linear forms. 
  For any $\mm$-primary ideal $\qq=(u_1,\ldots,u_m)R=(u)R$, such that
  \begin{equation*}
  \operatorname{deg.rad}\mathcal{F}_\qq(R)\leqslant d.
  \end{equation*}
  there exist $1\leqslant i_1<\cdots<i_n\leqslant\ellnew$, such that
  $\mathfrak{b}=(N_{i_1}(u),\ldots,N_{i_n}(u))R$ is a reduction of
  $\qq$ and $N_{i_1}(u),\ldots,N_{i_n}(u)$ is a system of parameters
  of $R$.
\end{main}


\section{Proof of the main result}

\begin{lemma}\label{lem:minimal_primes}
Let $I$ be an ideal in a Noetherian ring $R$ and let $\Min I=\{P_1,\ldots,P_s\}$.
If for $a\in R$ we set
$\Min(P_i+Ra)=\{P_{i1},\ldots,P_{is_i}\}$, then
\begin{equation*}
  \rad(I+Ra)=\bigcap_{i=1}^s\bigcap_{j=1}^{s_i}P_{ij}.
\end{equation*}
\end{lemma}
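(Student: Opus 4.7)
The plan is to prove both inclusions by a direct analysis of which primes lie over $I+Ra$, bypassing ideal-arithmetic identities (which would fail, since $(P_1 \cap \cdots \cap P_s) + Ra$ need not equal $\bigcap_i (P_i + Ra)$ on the nose; only after taking radicals do we recover equality).

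First I would recall the standard Noetherian fact that $\rad(J) = \bigcap_{\mathfrak{p} \supseteq J} \mathfrak{p}$ where the intersection ranges over all primes containing $J$; equivalently over $\Min J$. Applied to $I$ itself, this gives $\rad I = P_1 \cap \cdots \cap P_s$, and applied to each $P_i + Ra$ it gives $\rad(P_i + Ra) = P_{i1} \cap \cdots \cap P_{is_i}$.

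The key observation is that a prime $\mathfrak{p}$ of $R$ contains $I + Ra$ if and only if $\mathfrak{p} \supseteq I$ and $a \in \mathfrak{p}$. Since $\mathfrak{p} \supseteq I$ forces $\mathfrak{p} \supseteq P_i$ for some $i$, this is equivalent to $\mathfrak{p} \supseteq P_i + Ra$ for some $i$, which in turn is equivalent to $\mathfrak{p} \supseteq P_{ij}$ for some $i, j$. Hence
\begin{equation*}
\rad(I+Ra) \;=\; \bigcap_{\mathfrak{p} \supseteq I+Ra} \mathfrak{p} \;=\; \bigcap_{i,j}\;\bigcap_{\mathfrak{p}\supseteq P_{ij}} \mathfrak{p} \;=\; \bigcap_{i,j} \rad(P_{ij}) \;=\; \bigcap_{i=1}^s \bigcap_{j=1}^{s_i} P_{ij},
\end{equation*}
which is the claimed identity.

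I do not expect any real obstacle here: the argument is essentially a bookkeeping exercise in the correspondence between radicals and sets of primes. The only thing worth stating carefully is the iff chain $\mathfrak{p}\supseteq I+Ra \iff \mathfrak{p}\supseteq P_{ij}\text{ for some }i,j$, since this is the step that replaces the (false in general) algebraic identity $(\bigcap P_i)+Ra = \bigcap(P_i+Ra)$.
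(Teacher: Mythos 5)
Your proof is correct. The underlying observation is the same one the paper uses --- a prime contains $I+Ra$ if and only if it contains $P_i+Ra$ for some $i$, if and only if it contains some $P_{ij}$ --- but the packaging is genuinely different. The paper works only with minimal primes: it first shows that each minimal prime of $I+Ra$ is minimal over some $P_i+Ra$ and hence equals one of the $P_{ij}$ (giving $\supseteq$), and then, for the other inclusion, picks a $P_{ij}$ that is minimal under inclusion in the whole family $\{P_{ij}\}$ and argues by contradiction that it is minimal over $I+Ra$. You instead intersect over \emph{all} primes containing the relevant ideals, so the identity $\rad(I+Ra)=\bigcap_{i,j}P_{ij}$ falls out in one step from the set-theoretic equality $\{\mathfrak{p}:\mathfrak{p}\supseteq I+Ra\}=\bigcup_{i,j}\{\mathfrak{p}:\mathfrak{p}\supseteq P_{ij}\}$, with no minimality argument needed. (Indeed the inclusion $\rad(I+Ra)\subseteq\bigcap_{i,j}P_{ij}$ is immediate in either formulation, since every $P_{ij}$ contains $P_i+Ra\supseteq I+Ra$; the paper's second half actually proves the stronger fact that $\Min(I+Ra)$ consists exactly of the inclusion-minimal members of $\{P_{ij}\}$, which your argument neither needs nor establishes.) Both proofs are complete; yours is slightly more economical, while the paper's extracts a bit more information about the minimal primes themselves.
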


\begin{proof}
  Let $P$ be a minimal prime ideal over $I+Ra$. There exists $P_i$ such that $P_i\subset P$. Since $I+Ra\subset P_i+Ra\subset P$, then $P$ is minimal over $P_i+Ra$. This gives $\supset$.

  Now, take $P_{ij}$ which is minimal with respect to inclusion in the set $\{P_{ij}:i=1,\ldots,s,j=1,\ldots,s_i\}$.
  It is enough to prove that $P_{ij}$ is minimal over $I+Ra$. Suppose
  to the contrary, that there exists a prime ideal $P'\subset R$ such
  that $I+Ra\subset P'\subsetneq P_{ij}$. Then, there exists
  $P_{i'}\subset P'$ and since $a\in P'$ then as a result there exists
  $P_{i'j'}\subset P'$. Hence $P_{i'j'}\subsetneq P_{ij}$ which is
  impossible by the minimality of $P_{ij}$. 
\end{proof}

In what follows we will assume that $X=(X_1,\ldots,X_m)$ is a system
of variables.

\begin{lemma}\label{lem:rad_degree}
  Let $Q$ be a homogeneous
  ideal in a polynomial ring $k[X]$ over a field $k$. If $N\in k[X]$
  is a linear form, then  
  \begin{equation*}
    \operatorname{deg.rad}(k[X]/(Q + k[X]N)\leqslant
    \operatorname{deg.rad}(k[X]/Q).
  \end{equation*}
\end{lemma}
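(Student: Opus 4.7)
The plan is a two-step reduction: first use Lemma~\ref{lem:minimal_primes} to express the minimal primes of $Q+k[X]N$ in terms of those of the individual minimal primes $\mathfrak{p}_i$ of $Q$; then for each $i$ bound $\sum_j\deg(k[X]/\mathfrak{p}_{ij})$ by $\deg(k[X]/\mathfrak{p}_i)$ via a B\'ezout-style inequality for hyperplane sections of an irreducible projective variety.

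Let $\mathfrak{p}_1,\ldots,\mathfrak{p}_s$ be the minimal primes of $Q$ in $k[X]$; they are homogeneous since $Q$ is. Applying Lemma~\ref{lem:minimal_primes} with $I=Q$ and $a=N$ shows that every minimal prime of $Q+k[X]N$ lies in the collection $\{\mathfrak{p}_{ij}\}_{i,j}$, where, for each $i$, the primes $\mathfrak{p}_{i1},\ldots,\mathfrak{p}_{is_i}$ are the minimal primes over $\mathfrak{p}_i+k[X]N$. Because all degrees are non-negative, enlarging the index set of the sum only enlarges its value, so
\begin{equation*}
\operatorname{deg.rad}(k[X]/(Q+k[X]N))\leqslant \sum_{i=1}^{s}\sum_{j=1}^{s_i}\deg(k[X]/\mathfrak{p}_{ij}).
\end{equation*}

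The core step is then the single-prime estimate $\sum_j\deg(k[X]/\mathfrak{p}_{ij})\leqslant\deg(k[X]/\mathfrak{p}_i)$. If $N\in\mathfrak{p}_i$, the sum reduces to $\deg(k[X]/\mathfrak{p}_i)$ and we are done. Otherwise, since $k[X]/\mathfrak{p}_i$ is a graded integral domain, multiplication by $N$ is injective on it, and the short exact sequence
\begin{equation*}
0\to (k[X]/\mathfrak{p}_i)(-1)\xrightarrow{\,\cdot N\,} k[X]/\mathfrak{p}_i \to k[X]/(\mathfrak{p}_i+k[X]N)\to 0
\end{equation*}
of graded $k[X]$-modules, passed through the Hilbert polynomial, yields $\deg(k[X]/(\mathfrak{p}_i+k[X]N))=\deg(k[X]/\mathfrak{p}_i)$. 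Combining this with the standard additivity of degree along minimal primes
\begin{equation*}
\deg(k[X]/(\mathfrak{p}_i+k[X]N))=\sum_{j}\operatorname{length}\bigl((k[X]/(\mathfrak{p}_i+k[X]N))_{\mathfrak{p}_{ij}}\bigr)\deg(k[X]/\mathfrak{p}_{ij})
\end{equation*}
and using that each local length is at least $1$, we obtain the required inequality.

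Summing over $i$ then yields the lemma. The main obstacle lies in the B\'ezout step: one must confirm that the convention of degree used in $\operatorname{deg.rad}$ (the normalised leading coefficient of the Hilbert polynomial of $k[X]/\mathfrak{p}$) is preserved under quotienting by a linear non-zerodivisor, and that the additivity formula relating the degree of a graded quotient to those of its minimal primes applies here. Both are well-known facts for graded $k$-algebras finitely generated in degree one, so the proof reduces to careful bookkeeping of these formulas.
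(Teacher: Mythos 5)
Your proof is correct and follows essentially the same route as the paper: Lemma~\ref{lem:minimal_primes} to control the minimal primes of $Q+k[X]N$, preservation of degree under a linear section of each $k[X]/\mathfrak{p}_i$ (the paper cites Greuel--Pfister, Proposition 5.3.6, where you use the multiplication-by-$N$ exact sequence --- a self-contained and equally valid justification), and then the associativity formula with all local lengths at least $1$. The one point you relegate to ``careful bookkeeping'' that genuinely needs an argument is this: the additivity formula $\deg(k[X]/(\mathfrak{p}_i+k[X]N))=\sum_j\lambda_j\deg(k[X]/\mathfrak{p}_{ij})$ sums only over the minimal primes $\mathfrak{p}_{ij}$ with $\dim k[X]/\mathfrak{p}_{ij}$ maximal, so to conclude $\sum_j\deg(k[X]/\mathfrak{p}_{ij})\leqslant\deg(k[X]/(\mathfrak{p}_i+k[X]N))$ with the sum over \emph{all} $j$ you must know that every minimal prime of $\mathfrak{p}_i+k[X]N$ has the same height, namely $\height\mathfrak{p}_i+1$, when $N\notin\mathfrak{p}_i$. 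The paper establishes exactly this by localizing at $\mathfrak{p}_{ij}$ and applying Krull's principal ideal theorem in the domain $k[X]_{\mathfrak{p}_{ij}}/\mathfrak{p}_ik[X]_{\mathfrak{p}_{ij}}$; it is a short step, but it is the one place where the standard formulas do not apply verbatim, and your write-up should include it.
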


\begin{proof}
  Let $\Min Q=\{P_1, \ldots,P_s\}$ and let
  $\Min(P_i+k[X]N)=\{P_{i1},\ldots,P_{is_i}\}$. Observe that if
  $N\notin P_i$ then for any $j$, $\height P_{ij}=\height P_i + 1$. Indeed, let
  $S=k[X]_{P_{ij}}/P_ik[X]_{P_{ij}}$. Then
  \begin{equation*}
    \height P_{ij}=\dim
    k[X]_{P_{ij}} = \height P_ik[X]_{P_{ij}} + \dim S = \height P_i
    +\dim S. 
  \end{equation*}
  By Krull's
  principal ideal theorem $\height P_{ij}S\leqslant 1$, since $P_{ij}S$ is minimal over $(P_i+k[X]N)S$,
  which in turn is generated in $S$ by the image of $N$. The ring $S$ is
  an integral domain and the image of $N$ in $S$ is non-zero, hence
  $\dim S=\height P_{ij}S=1$, which proves the claim.
  
  From the above we have
  \begin{equation*}
    \deg (k[X]/(P_i + k[X]N))=\sum_{j=1}^{s_i}\lambda_j\deg
    (k[X]/P_{ij}),
  \end{equation*} 
  where $\lambda_j$ stands for the length of the ring
  $k[X]_{P_{ij}}/(P_i + k[X]N)k[X]_{P_{ij}}$ as a module over itself.
  On the other hand, since $\deg N=1$, then by
  \cite[Proposition 5.3.6]{GreuelPfisterSICP} we have
  \begin{equation*}
    \deg (k[X]/P_i)=\deg (k[X]/(P_i+k[X]N)).
  \end{equation*} 
  This and Lemma \ref{lem:minimal_primes} gives
  \begin{align*}
    \operatorname{deg.rad}(k[X]/Q)&=\sum_{i=1}^s\deg (k[X]/P_i)\\
    &=\sum_{i=1}^s\deg (k[X]/(P_i + k[X]N))\\
    &\geqslant\sum_{i=1}^s\sum_{j=1}^{s_i}\deg (k[X]/P_{ij})\\
    &\geqslant\operatorname{deg.rad}(k[X]/(Q + k[X]N)).
  \end{align*}
\end{proof}

\begin{lemma}\label{lem:opuszczenie_wymiaru}
Let $m$, $n$, $d$ be positive integers with
$m\geqslant n$, let $\ellnew=d(m-n)+1$, and let $N_j$, $j=1,\ldots,\ellnew$
be a system of independent linear functions. Then for any homogeneous
ideal $Q$ in $k[X]$, with $\dim (k[X]/Q)\leqslant n$ and
$\operatorname{deg.rad}(k[X]/Q)\leqslant d$, there exists $j$ such that
$\dim(k[X]/(Q + N_jk[X]))<n$. 
\end{lemma}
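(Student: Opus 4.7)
The plan is to locate the top-dimensional minimal primes of $Q$ and find a single $N_j$ that avoids all of them, after which the required dimension drop is automatic. Let $P_1, \ldots, P_s$ be the minimal primes of $Q$ with $\dim(k[X]/P_i) = n$; if no such primes exist, any $N_j$ works. Suppose we have found some $N_{j^*}$ with $N_{j^*} \notin P_i$ for all $i$. Every minimal prime $P'$ of $Q + N_{j^*}k[X]$ contains some minimal prime $P_i$ of $Q$: if $P_i$ is non-top-dimensional then $\dim(k[X]/P') \leqslant \dim(k[X]/P_i) < n$; if $P_i$ is top-dimensional, then $N_{j^*} \in P' \setminus P_i$ forces $P' \supsetneq P_i$, so again $\dim(k[X]/P') < n$.

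The existence of such $N_{j^*}$ reduces to a pigeonhole argument based on two bounds. First, the number of top-dimensional primes is at most $d$: each satisfies $\deg(k[X]/P_i) \geqslant 1$, and summing the degrees over all minimal primes gives $\operatorname{deg.rad}(k[X]/Q) \leqslant d$, whence $s \leqslant d$. Second, for each top-dimensional $P_i$, the subspace $V_i := P_i \cap k[X]_1$ of linear forms inside $P_i$ satisfies $\dim V_i \leqslant m - n$: a basis of $V_i$ gives linearly independent linear forms in $k[X]$, the ideal they generate has height equal to its number of generators (linearly independent linear forms can be completed to a coordinate system), and $\operatorname{ht}(P_i) = m - \dim(k[X]/P_i) = m - n$ by the catenarity of the polynomial ring.

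Now the independence hypothesis on $(N_j)$ says that any subset of size at most $m$ of them is linearly independent. Hence any subspace $V \subseteq k[X]_1$ of dimension $r < m$ contains at most $r$ of the $N_j$: if $k$ of them lie in $V$ then necessarily $k \leqslant m$ (otherwise $m$ of them would be trapped in a subspace of dimension $< m$), and being linearly independent they span a $k$-dimensional subspace of $V$, so $k \leqslant r$. Applied to each $V_i$ this caps the number of $N_j$ in $V_i$ by $m-n$, and summing over $i$ gives at most $s(m-n) \leqslant d(m-n) < \ellnew$ of the $N_j$ in the union $\bigcup_i V_i$. At least one $N_j$ therefore lies outside every $V_i$, and since $N_j$ is a linear form, this $N_j$ avoids every top-dimensional $P_i$, as required. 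The only substantive ingredient is the height bound $\dim V_i \leqslant m-n$; the rest is straightforward pigeonhole bookkeeping, so I do not anticipate any serious obstacle.
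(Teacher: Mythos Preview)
Your proof is correct and follows essentially the same approach as the paper: reduce to finding an $N_j$ outside every height-$(m-n)$ minimal prime of $Q$, bound the number of such primes by $d$ via $\operatorname{deg.rad}$, bound the number of $N_j$'s in each such prime by $m-n$ via the independence hypothesis, and conclude by pigeonhole that $d(m-n)+1$ forms are enough. The only cosmetic difference is that you phrase the second bound through the linear part $V_i = P_i \cap k[X]_1$, whereas the paper argues directly that $r$ independent forms generate an ideal of height $r$; these are the same observation.
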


\begin{proof}
Let $\{P_1,\ldots,P_s\}$ be the set of minimal prime ideals of $Q$. We
have $d\geqslant\sum_i\deg (k[X]/P_i)\geqslant s$. It is enough to proof that
there exists $j\in\{1,\ldots,\ellnew\}$ such that $N_j\notin P_i$ for any $P_i$ with $\height
P_i=m-n$. Indeed, in this case for any prime ideal $P$ containing
$Q+N_jk[X]$ we have $\height P > \height P_i=m-n$, and thus $\dim(k[X]/(Q + N_jk[X]))<n$.
Now, suppose to the contrary, that for any
$j\in\{1,\ldots,\ellnew\}$ there exists $\psi(j)\in\{1,\ldots,s\}$ such
that $N_j\in P_{\psi(j)}$ and 
$\height P_{\psi(j)}=m-n$. Since $N_j$ are independent then ideal
generated by any $r\leqslant m$ of them is of height $r$, so we
must have $\#\psi^{-1}(i)\leqslant m-n$ for any $i$. Hence
\begin{equation*}
    d(m-n)+1=\ellnew=\#\psi^{-1}(\{1,\ldots,s\})\leqslant s(m-n)\leqslant d(m-n), 
  \end{equation*}
  which gives a contradiction. 
\end{proof}

\begin{lemma}\label{lem:main_lemma}
Let $m$, $n$, $d$ be positive integers with $m\geqslant n$, let $\ellnew=d(m-n)+n$, and let $N_j$, $j=1,\ldots,\ellnew$ be a system of independent linear functions. Then for any homogeneous ideal $Q$ in $k[X]$, with $\dim (k[X]/Q)\leqslant n$ and $\operatorname{deg.rad}(k[X]/Q)\leqslant d$, there exists $1\leqslant i_1\leqslant\cdots\leqslant i_n\leqslant\ellnew$ such that the ideal $Q + (N_{i_1},\ldots,N_{i_n})k[X]$ is primary ideal belonging to $(X_1,\ldots,X_m)k[X]$.
\end{lemma}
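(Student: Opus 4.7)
The plan is induction on $n$, peeling off one linear form at a time via Lemma \ref{lem:opuszczenie_wymiaru}. For the base case $n=1$, we have $\ellnew = d(m-1)+1$, so Lemma \ref{lem:opuszczenie_wymiaru} directly produces some $N_j$ with $\dim(k[X]/(Q+N_jk[X])) = 0$. Every minimal prime of the homogeneous ideal $Q+N_jk[X]$ is then a homogeneous prime of height $m$ in $k[X]$, and the only such prime is $(X_1,\ldots,X_m)k[X]$; so $Q+N_jk[X]$ is primary to the graded maximal ideal.

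For the inductive step, assume the lemma for $n-1$. First I would apply Lemma \ref{lem:opuszczenie_wymiaru} to obtain $i_1$ with $Q_1 := Q + N_{i_1}k[X]$ satisfying $\dim(k[X]/Q_1)\leqslant n-1$; Lemma \ref{lem:rad_degree} then ensures $\operatorname{deg.rad}(k[X]/Q_1)\leqslant d$. Next I would pass to $R' := k[X]/(N_{i_1}) \simeq k[Y_1,\ldots,Y_{m-1}]$, with $Q'$ and $\bar N_j$ (for $j\ne i_1$) denoting the images in $R'$. The $\ellnew-1$ forms $\bar N_j$ remain independent in $R'$: a non-trivial relation $\sum c_k\bar N_{j_k}=0$ on some $m-1$ of them would lift to $\sum c_k N_{j_k} = c_0 N_{i_1}$ in $k[X]$ for some $c_0$, contradicting the linear independence of the $m$ distinct original forms $N_{i_1},N_{j_1},\ldots,N_{j_{m-1}}$.

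The key arithmetic that makes the induction close is
\begin{equation*}
  d(m-n) + (n-1) = d\bigl((m-1)-(n-1)\bigr) + (n-1),
\end{equation*}
i.e., the $\ellnew - 1$ remaining forms in $R'$ match the count required by the lemma with parameters $(m',n',d') = (m-1,n-1,d)$. Since $m\geqslant n$ gives $m'\geqslant n'$, the inductive hypothesis yields $i_2<\cdots<i_n$ such that $Q' + (\bar N_{i_2},\ldots,\bar N_{i_n})R'$ is primary to the graded maximal ideal of $R'$. Pulling back through the surjection $k[X]\to R'$, the ideal $Q + (N_{i_1},\ldots,N_{i_n})k[X]$ has radical $(X_1,\ldots,X_m)k[X]$, and is therefore primary to it.

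The main obstacle, such as it is, lies in seeing that the combinatorics of $\ellnew$ are chosen precisely so that after one step the residual problem in $R'$ is an instance of the lemma in dimension $n-1$; the verification that the images $\bar N_j$ stay independent and that $\operatorname{deg.rad}$ does not grow are handled by the linear-algebra argument above and by Lemma \ref{lem:rad_degree}, respectively.
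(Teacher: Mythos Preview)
Your proof is correct and follows essentially the same route as the paper: induction on $n$, using Lemma~\ref{lem:opuszczenie_wymiaru} to drop the dimension by one, Lemma~\ref{lem:rad_degree} to control $\operatorname{deg.rad}$, and the arithmetic identity $d(m-n)+(n-1)=d((m-1)-(n-1))+(n-1)$ to match the inductive hypothesis in the quotient $k[X]/(N_{i_1})$. You are more explicit than the paper in verifying that the images $\bar N_j$ remain an independent system in $R'\simeq k[Y_1,\ldots,Y_{m-1}]$, which the paper tacitly assumes.
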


\begin{proof}
The proof will be carried out by induction with respect to $n\in\{1,\ldots,m\}$.
For $n=1$ the assertion
follows from Lemma \ref{lem:opuszczenie_wymiaru}. Indeed, there exists $N_{i_1}$ such that
$\dim(k[X]/(Q+N_{i_1}k[X]))=0$, hence
$Q+N_{i_1}k[X]$ is $(X_1,\ldots,X_m)k[X]$-primary. 

Assume that the assertion holds $1\leqslant n-1<m$. Take any homogeneous ideal $Q\subset k[X]$ with $\dim(k[X]/Q)\leqslant n$ and $\operatorname{deg.rad}(k[X]/Q)\leqslant d$. By Lemma \ref{lem:opuszczenie_wymiaru} there exists, say $N_1$, such that $\dim(k[X]/(Q+N_1k[X]))<n$. By Lemma \ref{lem:rad_degree}, $\operatorname{deg.rad}(k[X]/(Q+N_1k[X])\leqslant d$.
So the induction hypothesis is fulfilled for the image of the ideal $Q$ in $k[X]/k[X]N_1$. Hence there exists $N_{i_2},\ldots,N_{i_n}$ in the family
$N_2,\ldots,N_\ellnew$ of $\ellnew-1=d(m-n)+n-1=d((m-1)-(n-1))+n-1$ independent linear functions such that
$Q+(N_1,N_{i_2},\ldots,N_{i_n})$ is a primary ideal belonging to $(X_1,\ldots,X_m)k[X]$.
\end{proof}

Our aim is to prove

\begin{main}\label{thm:main_theorem}
  Let  $m$, $n$, $d$ be some positive integers, where $m\geqslant n$.
  Set $\ellnew=d(m-n)+n$. Let $(R,\mm)$ be a local Noetherian $n$-dimensional ring and let
  $\bar N_j\in \frac{R}{\mm}[X_1,\ldots,X_m]$, $j=1,\ldots,\ellnew$ be a sequence of independent linear forms. 
  For any $\mm$-primary ideal $\qq=(u_1,\ldots,u_m)R=(u)R$, such that
  \begin{equation*}
  \operatorname{deg.rad}\mathcal{F}_\qq(R)\leqslant d.
  \end{equation*}
  there exist $1\leqslant i_1<\cdots<i_n\leqslant\ellnew$, such that
  $\mathfrak{b}=(N_{i_1}(u),\ldots,N_{i_n}(u))R$ is a reduction of
  $\qq$ and $N_{i_1}(u),\ldots,N_{i_n}(u)$ is a system of parameters
  of $R$.
\end{main}

\begin{proof}
  From Step 2 in the proof
  of \cite[Theorem 14.14]{matsumura} we have
  \begin{claim}\label{claim}
    Let $\bb$ be the ideal of $R$ generated by $n$ linear combinations
    $L_i(u)=\sum a_{ij}u_j$ (for $1\leqslant i\leqslant n$) of
    $u_1,\ldots,u_m$ with coefficients in $R$. Then if we set
    $l_i(X)=\bar L_i(X)$, a necessary and sufficient condition for
    $\bb$ to be a reduction of $\qq$ is that the ideal
    $(Q,l_1,\ldots,l_n)$ of $\frac{R}{\mm}[X]$ is $(X_1,\ldots,X_m)$-primary.
  \end{claim}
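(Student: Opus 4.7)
The plan is to translate the reduction condition on $\bb$ into a graded-ring condition in the fiber cone $\mathcal{F}_\qq(R)$ via the isomorphism \eqref{eq:isomorphism_Fq}, and then verify that the two reformulated conditions agree directly.

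First, I would observe that under the isomorphism $\mathcal{F}_\qq(R)\simeq (R/\mm)[X]/Q$, the ideal $(Q,l_1,\ldots,l_n)/Q$ corresponds to the ideal $\mathcal{B}\subset\mathcal{F}_\qq(R)$ generated by the degree-one elements $L_1(u)+\mm\qq,\ldots,L_n(u)+\mm\qq$, while $(X_1,\ldots,X_m)/Q$ corresponds to the irrelevant ideal $\mathcal{F}_\qq(R)_+=\bigoplus_{j\geqslant 1}\qq^j/\mm\qq^j$. Since every ideal in sight is homogeneous and $\mathcal{F}_\qq(R)$ is Noetherian, the condition that $(Q,l_1,\ldots,l_n)$ is $(X_1,\ldots,X_m)$-primary is equivalent to $\mathcal{F}_\qq(R)_+^N\subset\mathcal{B}$ for some $N$, which, because $\mathcal{F}_\qq(R)_+$ is generated in degree one, reduces to $\mathcal{F}_\qq(R)_j\subset\mathcal{B}$ for all $j\geqslant N$. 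A direct computation with the degree-one generators of $\mathcal{B}$ gives
\begin{equation*}
\mathcal{B}\cap\mathcal{F}_\qq(R)_j=(\bb\qq^{j-1}+\mm\qq^j)/\mm\qq^j,
\end{equation*}
so the primariness condition translates to the assertion that $\qq^j=\bb\qq^{j-1}+\mm\qq^j$ for all sufficiently large $j$.

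For the forward direction, suppose $\qq^{k+1}=\bb\qq^k$. Reducing modulo $\mm\qq^{k+1}$ gives $\mathcal{F}_\qq(R)_{k+1}\subset\mathcal{B}$, and multiplying by $\mathcal{F}_\qq(R)_1^{j-k-1}$ propagates this to $\mathcal{F}_\qq(R)_j\subset\mathcal{B}$ for every $j\geqslant k+1$, which is exactly the desired primariness. For the converse, assume $(Q,l_1,\ldots,l_n)$ is $(X_1,\ldots,X_m)$-primary; by the translation we have $\qq^j=\bb\qq^{j-1}+\mm\qq^j$ for some $j$. The finitely generated $R$-module $M:=\qq^j/\bb\qq^{j-1}$ then satisfies $M=\mm M$, so Nakayama's lemma in the local ring $(R,\mm)$ yields $M=0$, i.e.\ $\qq^j=\bb\qq^{j-1}$, and $\bb$ is a reduction of $\qq$.

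The only nontrivial bookkeeping is the identification of $\mathcal{B}\cap\mathcal{F}_\qq(R)_j$ with the image of $\bb\qq^{j-1}$ in the $j$-th graded piece of the fiber cone; once that is in place, both implications are immediate, the forward one via the fact that $\mathcal{F}_\qq(R)$ is generated in degree one, and the converse via a single invocation of Nakayama's lemma.
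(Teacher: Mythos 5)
Your proof is correct and is essentially the standard argument (the one in Step~2 of the proof of \cite[Theorem 14.14]{matsumura}) that the paper simply cites rather than reproves: translate primariness of $(Q,l_1,\ldots,l_n)$ into the statement $\qq^j=\bb\qq^{j-1}+\mm\qq^j$ for large $j$ via the graded pieces of the fiber cone, and close the converse with Nakayama's lemma. The identification $\mathcal{B}\cap\mathcal{F}_\qq(R)_j=(\bb\qq^{j-1}+\mm\qq^j)/\mm\qq^j$ and the use of generation in degree one are exactly the right bookkeeping, so nothing is missing.
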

  Let $Q\subset \frac{R}{\mm}[X]$ be a homogeneous ideal such that
  \eqref{eq:isomorphism_Fq} holds. 
  By Lemma \ref{lem:main_lemma} there exist $1\leqslant
  i_1<\cdots<i_n\leqslant\ellnew$ 
  such that $Q+(\bar N_{i_1},\ldots,\bar N_{i_n})\frac{R}{\mm}[X]$ is
  a primary ideal belonging to
  $(X_1,\ldots,X_m)\frac{R}{\mm}[X]$. Thus by the Claim \ref{claim} we
  get the assertion.  
\end{proof}

\begin{remark}
  If $\frac{R}{\mm}$ is infinite then by the above theorem any
  $\mm$-primary ideal has a reduction generated by $n$ linear
  combinations of its generators with the coefficients in $R$. In
  fact, from \cite[Theorem 14.14]{matsumura} it follows that this
  reduction may be given by a generic linear combinations.

  On the other hand, if $k=\frac{R}{\mm}$ is a finite field, then there is only
  finitely many independent linear forms in
  $k[X_1,\ldots,X_m]$. For example, if $k=\mathbb{Z}_2$ then there are
  $m+1$ independent linear forms: $X_1,\ldots,X_m$ and
  $X_1+\cdots+X_m$. 
\end{remark}

\begin{corollary}
  Let $(R,\mm)$ be a local Noetherian
  $n$-dimensional ring and let $\qq=(u_1,\ldots,u_{m})R$ be an
  $\mm$-primary ideal such that
  $\operatorname{deg.rad}\mathcal{F}_\qq(R)=1$. Then there exist
  $1\leqslant i_1<\cdots<i_n\leqslant m$ such that
  $(u_{i_1},\ldots,u_{i_n})R$ is a reduction of $\qq$.
\end{corollary}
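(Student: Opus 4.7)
The plan is to derive this corollary as an immediate specialization of the Main Theorem with $d=1$. First I would compute the value of $\ellnew$ given in the Main Theorem under this specialization: $\ellnew = 1\cdot(m-n)+n = m$. So the Main Theorem applied with $d=1$ asks for a sequence of $m$ independent linear forms in $\frac{R}{\mm}[X_1,\ldots,X_m]$.

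The natural candidates are the coordinate forms themselves. I would set $\bar N_j = X_j$ for $j=1,\ldots,m$. These form an independent sequence in the sense of the paper: one must check that any choice of $m$ of them is linearly independent, but there is only one such choice (all of them), and $X_1,\ldots,X_m$ are visibly linearly independent over $R/\mm$. Note that $N_j(u)=u_j$, so applying the Main Theorem to this data will produce a reduction generated by a subset of the original generators $u_1,\ldots,u_m$ rather than by nontrivial linear combinations of them.

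With those choices in place, the hypothesis $\operatorname{deg.rad}\mathcal{F}_\qq(R)=1\leqslant d$ is exactly the degree hypothesis of the Main Theorem, so the theorem yields indices $1\leqslant i_1<\cdots<i_n\leqslant m$ such that $(N_{i_1}(u),\ldots,N_{i_n}(u))R=(u_{i_1},\ldots,u_{i_n})R$ is a reduction of $\qq$, which is precisely the desired conclusion. I expect no serious obstacle in carrying this out; the only conceptual point worth recording is that when $\operatorname{deg.rad}\mathcal{F}_\qq(R)=1$ the fiber cone has a single minimal prime of degree one, and the content of the corollary is that in this situation the reducing combinations can be taken among the generators themselves, with no need to pass to generic combinations.
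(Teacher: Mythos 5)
Your proposal is correct and is exactly the intended derivation: the paper states this corollary without proof as the immediate specialization of the Main Theorem to $d=1$, $\ellnew=m$, with $\bar N_j=X_j$ serving as the independent sequence (the only point you leave tacit, that $m\geqslant n$, follows since $m$ is at least the minimal number of generators of $\qq$, which is bounded below by $\ell(\qq)=\dim R=n$).
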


\begin{corollary}
  Let $(R,\mm)$ be a local Noetherian
  $n$-dimensional ring and let $\qq=(u_1,\ldots,u_{n+1})R$ be an
  $\mm$-primary ideal such that
  $\operatorname{deg.rad}\mathcal{F}_\qq(R)\leqslant 2$. Then
  \begin{enumerate}
  \item $(u_{i_1},\ldots,u_{i_n})R$ for some $1\leqslant
    i_1<\cdots<i_n\leqslant n+1$ or
  \item $(u_{i_1},\ldots,u_{i_{n-1}},\sum_ju_j)R$ for some $1\leqslant
  i_1<\cdots<i_{n-1}\leqslant n+1$
  \end{enumerate}
  is a reduction of $\qq$.
\end{corollary}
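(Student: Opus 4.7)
The plan is to deduce the corollary by specializing the Main Theorem to $m=n+1$ and $d=2$, in which case $\ellnew=d(m-n)+n=n+2$. The Main Theorem then requires $n+2$ independent linear forms in $\frac{R}{\mm}[X_1,\ldots,X_{n+1}]$---that is, $n+2$ forms such that every $(n+1)$-element subfamily is linearly independent. I will work with the explicit list
\begin{equation*}
  \bar N_j=X_j\quad(j=1,\ldots,n+1),\qquad \bar N_{n+2}=X_1+\cdots+X_{n+1},
\end{equation*}
and verify the independence condition before invoking the theorem.

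The independence check is a short calculation. Any $(n+1)$-element subfamily is either the standard basis $\{X_1,\ldots,X_{n+1}\}$, which is obviously linearly independent, or else consists of $\{X_1,\ldots,X_{n+1}\}\setminus\{X_i\}$ together with $\bar N_{n+2}$ for some single $i$. In the latter case the identity $X_i=\bar N_{n+2}-\sum_{j\neq i}X_j$ exhibits $X_i$ as a combination of the subfamily, so the subfamily spans $\operatorname{span}(X_1,\ldots,X_{n+1})$ and is therefore linearly independent.

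Under the hypothesis $\operatorname{deg.rad}\mathcal{F}_\qq(R)\leqslant 2=d$, the Main Theorem then supplies indices $1\leqslant i_1<\cdots<i_n\leqslant n+2$ such that $\mathfrak{b}=(N_{i_1}(u),\ldots,N_{i_n}(u))R$ is a reduction of $\qq$. The last step is a case split on whether the extra index $n+2$ is chosen: if $i_n\leqslant n+1$, then all selected forms are coordinate forms and $\mathfrak{b}=(u_{i_1},\ldots,u_{i_n})R$, giving alternative (1); if $i_n=n+2$, then $i_1<\cdots<i_{n-1}\leqslant n+1$ and the last generator evaluates as $N_{n+2}(u)=\sum_j u_j$, so $\mathfrak{b}=(u_{i_1},\ldots,u_{i_{n-1}},\sum_j u_j)R$, giving alternative (2). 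There is no real obstacle here---the only work is the independence verification, since all substantive content is already contained in the Main Theorem.
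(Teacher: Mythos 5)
Your proof is correct and is exactly the argument the paper intends (the corollary is stated without proof, but the preceding remark points to precisely the forms $X_1,\ldots,X_{n+1}$ and $X_1+\cdots+X_{n+1}$ as the $m+1$ independent linear forms available over any residue field). The specialization $m=n+1$, $d=2$, $\ellnew=n+2$, the independence check, and the case split on whether the index $n+2$ is selected are all as expected.
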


\begin{corollary}
  Let  $m$, $n$, $d$ be some positive integers, where $m\geqslant n$.
  Set $\ellnew=d(m-n)+n$. Let $(R,\mm)$ be a local Noetherian
  $n$-dimensional ring such that $\#\frac{R}{\mm}\geqslant
  l$. Then there exist linear forms $N_j\in R[X_1,\ldots,X_m]$,
  $j=1,\ldots,\ellnew$  such that 
  $\bar N_j\in \frac{R}{\mm}[X_1,\ldots,X_m]$
  are independent. Hence, for any $\mm$-primary ideal 
  $\qq=(u_1,\ldots,u_m)R=(u)R$, such that 
  \begin{equation*}
    \operatorname{deg.rad}\mathcal{F}_\qq(R)\leqslant d.
  \end{equation*}
  there exist $1\leqslant i_1<\cdots<i_n\leqslant\ellnew$, such that
  $\mathfrak{b}=(N_{i_1}(u),\ldots,N_{i_n}(u))R$ is a reduction of
  $\qq$ and $N_{i_1}(u),\ldots,N_{i_n}(u)$ is a system of parameters
  of $R$.
\end{corollary}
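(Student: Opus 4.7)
The plan is to split the proof into (a) an explicit construction of $\ellnew$ linear forms $N_j \in R[X_1,\ldots,X_m]$ whose residues $\bar N_j$ satisfy the paper's independence condition, and (b) a direct invocation of the Main Theorem once such forms are in hand. Part (b) is immediate, so all the real work is in (a). The key observation enabling the construction is the inequality $\ellnew = d(m-n)+n \geqslant m$ (valid for $d\geqslant 1$), which combined with the hypothesis $\#\frac{R}{\mm}\geqslant \ellnew$ guarantees at least $m$ distinct elements in the residue field — exactly what is needed for a Vandermonde design.

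Concretely, I would pick $\ellnew$ pairwise distinct elements $\bar t_1,\ldots,\bar t_\ellnew \in \frac{R}{\mm}$, lift them arbitrarily to $t_1,\ldots,t_\ellnew\in R$, and set
\begin{equation*}
N_j(X) := X_1 + t_j X_2 + t_j^2 X_3 + \cdots + t_j^{m-1} X_m \in R[X_1,\ldots,X_m], \qquad j=1,\ldots,\ellnew.
\end{equation*}
To verify the independence condition of the paper, I would fix any $1\leqslant i_1<\cdots<i_m\leqslant\ellnew$ and observe that the coefficient matrix of $\bar N_{i_1},\ldots,\bar N_{i_m}$ with respect to the basis $X_1,\ldots,X_m$ of linear forms is the Vandermonde matrix $(\bar t_{i_p}^{\,q-1})_{p,q=1}^{m}$, whose determinant $\prod_{p<q}(\bar t_{i_q}-\bar t_{i_p})$ is nonzero because the $\bar t_{i_p}$ are distinct. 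Thus any $m$ of the $\bar N_j$ are linearly independent over $\frac{R}{\mm}$, as required.

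With independence established, applying the Main Theorem to $\qq=(u_1,\ldots,u_m)R$ under the bound $\operatorname{deg.rad}\mathcal{F}_\qq(R)\leqslant d$ immediately delivers indices $1\leqslant i_1<\cdots<i_n\leqslant\ellnew$ for which $\bb = (N_{i_1}(u),\ldots,N_{i_n}(u))R$ is a reduction of $\qq$ and $N_{i_1}(u),\ldots,N_{i_n}(u)$ is a system of parameters of $R$. The only subtle point is recognizing that the paper's notion of \emph{independent} is stronger than forming a basis — every $m$-subset must be linearly independent — which is exactly why a Vandermonde-style design (rather than, say, completions of $X_1,\ldots,X_m$ by a few linear combinations) is the correct tool, and why the size hypothesis on the residue field is phrased in terms of $\ellnew$ rather than $m$.
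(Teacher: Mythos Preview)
Your proposal is correct and matches the paper's proof essentially verbatim: the paper also sets $\bar N_j(X)=X_1+\bar a_jX_2+\cdots+\bar a_j^{m-1}X_m$ for $\ellnew$ elements $a_j\in R$ with pairwise distinct residues, asserts independence, and then invokes the Main Theorem. Your added Vandermonde-determinant verification simply fills in the detail the paper leaves implicit.
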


\begin{proof}
  Set $\bar N_j(X):=X_1+\bar a_jX_2+\cdots+\bar a_j^{m-1}X_m$, where
  $a_j\in R$, $j=1,\ldots,l$ are any elements in $R$ such that $\bar
  a_j$ are pairwise distinct. Then $\bar N_j(X)\in\frac{R}{\mm}[X]$,
  $j=1,\ldots,l$ is a sequence of independent linear forms. The second
  part of the assertion follows from Theorem \ref{thm:main_theorem}.
\end{proof}

\begin{corollary}\label{cor:minimum_of_mult}
  Under the assumptions and notations of the Main Theorem we have
  \begin{equation*}
    e(\qq)=\min_{1\leqslant i_1<\cdots <i_n\leqslant\ellnew}
    \{e(\mathfrak{b}):\mathfrak{b}=(N_{i_1}(u),\ldots,N_{i_n}(u))R\},
  \end{equation*}
  where we set $e(\bb)=\infty$ if $\bb$ is not $\mm$-primary.
  What is more, if $R$ is formally equidimensional and $e(\qq)=e(\mathfrak{b})$ for some $\mathfrak{b}$ as above
  then $\mathfrak{b}$ is a reduction of $\qq$.
\end{corollary}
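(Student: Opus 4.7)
My plan is to reduce the first claim to the monotonicity of the Hilbert--Samuel multiplicity together with the existence part of the Main Theorem, and then to derive the second claim as a direct application of Rees's theorem on equimultiple ideals in formally equidimensional rings.

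For the equality, I would first observe that every candidate $\mathfrak{b}=(N_{i_1}(u),\ldots,N_{i_n}(u))R$ is contained in $\qq$, simply because each $N_{i_j}(u)$ is an $R$-linear combination of $u_1,\ldots,u_m$. When $\mathfrak{b}$ is $\mm$-primary, the standard monotonicity of multiplicities for $\mm$-primary ideals (obtained from $\mathfrak{b}^k\subseteq\qq^k$ by comparing lengths and passing to the leading coefficient of the Hilbert--Samuel polynomial) gives $e(\mathfrak{b})\geqslant e(\qq)$. Combined with the convention $e(\mathfrak{b})=\infty$ when $\mathfrak{b}$ fails to be $\mm$-primary, this shows that every entry in the set is bounded below by $e(\qq)$. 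The Main Theorem supplies a choice of indices $i_1<\cdots<i_n$ for which $\mathfrak{b}$ is a reduction of $\qq$; for such $\mathfrak{b}$ the equality $e(\mathfrak{b})=e(\qq)$ holds, because a reduction has the same Hilbert--Samuel multiplicity as the bigger ideal. Therefore the minimum is attained and equals $e(\qq)$.

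For the second assertion, the key ingredient is Rees's theorem: in a formally equidimensional Noetherian local ring $(R,\mm)$, if $\mathfrak{b}\subseteq\qq$ are $\mm$-primary ideals with $e(\mathfrak{b})=e(\qq)$, then $\mathfrak{b}$ is a reduction of $\qq$. So I only need to check that a candidate $\mathfrak{b}$ with $e(\qq)=e(\mathfrak{b})$ is automatically $\mm$-primary, which is immediate from $e(\qq)<\infty$ and our convention; Rees's theorem then yields the assertion.

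The main obstacle, if one prefers not to cite Rees's theorem as a black box, is justifying the converse direction equality of multiplicities implies reduction. Since this is a substantial result with its own proof, I would not reprove it here and would simply refer to a standard source such as Huneke--Swanson, Theorem~11.3.1. Once invoked, the remaining work in the corollary reduces to monotonicity of multiplicity under inclusion of $\mm$-primary ideals and a single appeal to the Main Theorem, both of which are straightforward.
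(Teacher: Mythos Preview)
Your proposal is correct and follows essentially the same approach as the paper: use $\mathfrak{b}\subseteq\qq$ for the lower bound, invoke the Main Theorem to exhibit a reduction attaining equality, and apply Rees's theorem (Huneke--Swanson, Theorem~11.3.1) for the converse in the formally equidimensional case. The only cosmetic difference is that the paper routes the implication ``reduction $\Rightarrow$ equal multiplicity'' through the equality of integral closures, whereas you state it directly.
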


\begin{proof}
  For any $\mathfrak{b}=(N_{i_1}(u),\ldots,N_{i_n}(u))R$, where
  $1\leqslant i_1<\cdots <i_n\leqslant\ellnew$ we have $\mathfrak{b}\subset\qq$, hence
  $e(\qq)\leqslant e(\mathfrak{b})$. On the other hand, if $\mathfrak{b}$ is
  a reduction of $\qq$ then the integral closures of $\bb$ and $\qq$
  are equal (\cite[Corollary 1.2.5]{huneke}), hence
  $e(\qq)=e(\mathfrak{b})$ (\cite[Proposition 11.2.1]{huneke}). This
  gives the first part 
  of the assertion. The second follows from the Rees Theorem (see
  \cite[Theorem 11.3.1]{huneke}). 
\end{proof}

\begin{example}
  Let $\qq=(x^a, xy, y^b)R$, $2\leqslant a < b$ be an ideal in a power
  series ring $R:=k[[x,y]]$ over a field $k$. Let $Q\in k[X,Y,Z]$ be the
  ideal of null-forms of $\qq$. Then $Q$ is a principal ideal
  generated by $XZ$. Indeed, let $P(X,Y,Z)=XZ-x^{a-2}y^{b-2}Y^2\in
  R[X,Y,Z]$. Then $P(x^a,xy,y^b)=0$ and $\bar P(X,Y,Z)=XZ$, hence
  $XZ\in Q$. On the other hand, let $u_i=X^{r-i}Y^i$, $i=0,\ldots,r$,
  $v_j=Y^{r-j}Z^j$, $j=1,\ldots,r$ be all the monomials in $X,Y,Z$
  of degree $r>0$ which are not divisible by $XZ$. Then
  the images $u_i(x^a,xy,y^b)=x^{a(r-i)+i}y^i$, $v_j(x^a,xy,y^b)=x^{r-j}y^{(b-1)j+r}$ belongs to
  $\qq^r\setminus{\mm\qq^r}$ for any $i,j$. Moreover those images are
  pairwise disjoint. On the other hand, if $w$
  is a monomial of degree $r$ in $X,Y,Z$ divisible by $XZ$, then $w(x^a,xy,y^b)\in
  \mm\qq^r$. Thus, if $P(X,Y,Z)\in R[X,Y,Z]$ is a
  homogeneous form of degree $r$ such that $P(x^a,xy,y^b)\in\mm\qq^r$,
  then $\bar P(X,Y,Z)$ is divisible by $XZ$. This proves that
  $Q=(XZ)k[X,Y,Z]$ and as a result
  \begin{equation}
    \label{eq:2}
    \mathcal{F}_\qq(R)\simeq \frac{k[X,Y,Z]}{(XZ)k[X,Y,Z]}.
  \end{equation}
  Hence $\operatorname{deg.rad}\mathcal{F}_\qq(R)=2$ and $\ellnew=4$. Let
  \begin{align*}
    \bar N_1(X,Y,Z)&=X,\\
    \bar N_2(X,Y,Z)&=Y,\\
    \bar N_3(X,Y,Z)&=Z,\\
    \bar N_4(X,Y,Z)&=X+Y+Z.
  \end{align*}
  Now, by Corollary \ref{cor:minimum_of_mult} between $\binom{4}{2}=6$ possible choices for $N_{i_1},N_{i_2}$ there exists
  the only one $N_2,N_4$ which gives the reduction of $\qq$. Indeed,
  \begin{equation*}
    e((N_2(x^a,xy,y^b),N_4(x^a,xy,y^b))R)=e((xy,x^a+xy+y^b)R)=a+b,
  \end{equation*}
  but for the other choices the corresponding ideal
  is not $\mm$-primary (for $(X,Y)$ and ($Y,Z)$) or has multiplicity $ab$
  (for $(X,Z)$, $(X,X+Y+Z)$ and $(Z,X+Y+Z)$).
\end{example}

\begin{example}
  Let $0<a<b<c$ be integers such that $c>r(b-a)$ where $r=\lceil
  b/(b-a)\rceil$. Let $\qq=(x^c,x^by^a,x^ay^b,y^c)R$ be an ideal in a
  power series ring $R:=k[[x,y]]$ over a field $k$. By
  \cite[Thm. 3.5]{HerzogQureshiSaem} we have
  \begin{equation*}
    \mathcal{F}_\qq(R)\simeq \frac{k[X,Y,U,V]}{(XU^{r-1},Y^{r-1}V,XV)k[X,Y,U,V]}.
  \end{equation*}
  Set $Q:=(XU^{r-1},Y^{r-1}V,XV)k[X,Y,U,V]$. Since
  $\sqrt{Q}=(X,U)\cap(Y,V)\cap(X,V)$, then
  $\operatorname{deg.rad}\mathcal{F}_\qq(R)=3$. Hence
  for any $l=3\cdot(4-2)+2=8$ independent linear forms
  $N_1,\ldots,N_8$ there exist $1\leqslant i_1<i_2\leqslant 8$ such
  that
  \begin{equation}\label{eq:3}
    (N_{i_1}(x^c,x^by^a,x^ay^b,y^c),N_{i_2}(x^c,x^by^a,x^ay^b,y^c))R
  \end{equation}
  gives a reduction of $\qq$.

  We show, that in fact for any
  $N_1,\ldots,N_6$ independent linear forms there exist $1\leqslant
  i_1<i_2\leqslant 6$ such that ideal \eqref{eq:3} is a reduction of
  $\qq$. By Claim \ref{claim} it is enough to prove that  $\height (Q
  + (N_{i_1},N_{i_2}))=4$ for some $1\leqslant i_1<i_2\leqslant
  6$. Assume to 
  the contrary, that for any choice of $1\leqslant i_1<i_2\leqslant
  6$ we have $\height (Q + (N_{i_1},N_{i_2}))<4$. First, let us assume,
  that for any such choice this height is equal to 2. Then, after a
  possible renumbering, we get
  \begin{equation*}
    \height (X, U, N_1, N_2) = \height (Y, V, N_3,
    N_4) = \height (X, V, N_5, N_6) = 2.
  \end{equation*}
  Hence $N_1,N_2,N_5,N_6\in (X,U,V)$ which gives a contradiction. Now,
  let us assume, that for any such choice the aforementioned height is
  equal to 3. Thus there exists, say $N_1$, such that
  \begin{equation*}
    \height (X, U, N_1) = \height (Y, V, N_1) = \height (X, V, N_1) = 3.
  \end{equation*}
  Then, again after a possible renumbering, we get
  \begin{equation*}
    \height (X, U, N_1, N_2, N_3) = \height (X, V, N_1, N_4) = 3,
  \end{equation*}
  or
  \begin{equation*}
    \height (Y, V, N_1, N_2, N_3) = \height (X, V, N_1, N_4) = 3.
  \end{equation*}
  In both cases we get a contradiction. Thus by Claim \ref{claim} we
  get the assertion. 
\end{example}

\begin{corollary}
  If all minimal prime ideals of
  $\mathcal{F}_\qq(R)$ are of the same height
  then in the Main Theorem and in Corollary \ref{cor:minimum_of_mult},
  $\operatorname{deg.rad}\mathcal{F}_\qq(R)$ may be replaced by 
  the multiplicity $e(\qq)$.
\end{corollary}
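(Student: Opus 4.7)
The plan is to reduce this corollary to the Main Theorem and to Corollary \ref{cor:minimum_of_mult} by proving, under the equidimensionality hypothesis, the key inequality $\operatorname{deg.rad}\mathcal{F}_\qq(R)\leqslant e(\qq)$. Once this is in hand, invoking those results with $d:=e(\qq)$ immediately yields the claimed substitutions. I would establish the inequality by interposing the multiplicity of the fiber cone itself, splitting it as $\operatorname{deg.rad}\mathcal{F}_\qq(R)\leqslant e(\mathcal{F}_\qq(R))\leqslant e(\qq)$.

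For the first inequality, write $S:=\mathcal{F}_\qq(R)$, a standard graded algebra over $k=R/\mm$ of Krull dimension $n$ (since $\qq$ is $\mm$-primary, its analytic spread equals $\dim R$). The associativity formula for the multiplicity of a standard graded algebra expresses $e(S)$ as a sum $\sum\lambda(S_{\mathfrak{p}})\deg(S/\mathfrak{p})$ over those minimal primes $\mathfrak{p}$ with $\dim(S/\mathfrak{p})=n$. The equidimensionality hypothesis is exactly what forces this sum to range over all of $\Min S$; since each local length $\lambda(S_{\mathfrak{p}})\geqslant 1$, the sum is bounded below by $\operatorname{deg.rad}S$.

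For the second inequality, I would compare Hilbert polynomials. The containment $\qq^{k+1}\subseteq\mm\qq^k$ (which holds because $\qq\subseteq\mm$) gives surjections $\qq^k/\qq^{k+1}\twoheadrightarrow\qq^k/\mm\qq^k$, so the Hilbert function of $\mathcal{F}_\qq(R)$ is pointwise bounded by the Hilbert function of the associated graded ring $G_\qq(R)=\bigoplus_k\qq^k/\qq^{k+1}$. Since $\qq$ is $\mm$-primary, both rings have Krull dimension $n$, hence Hilbert polynomials of the same degree $n-1$; comparing leading coefficients yields $e(\mathcal{F}_\qq(R))\leqslant e(G_\qq(R))=e(\qq)$. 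The delicate point in the whole argument is the role of the equidimensionality hypothesis in the first step: without it, minimal primes of strictly smaller dimension would contribute to $\operatorname{deg.rad}\mathcal{F}_\qq(R)$ but would not appear in the associativity formula for $e(\mathcal{F}_\qq(R))$, so that inequality could genuinely fail. Everything else reduces to routine manipulation of standard Hilbert polynomial data.
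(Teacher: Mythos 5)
Your proposal is correct and follows essentially the same route as the paper: the same two-step chain $\operatorname{deg.rad}\mathcal{F}_\qq(R)\leqslant e(\mathcal{F}_\qq(R))=\deg\mathcal{F}_\qq(R)\leqslant e(\qq)$, with equidimensionality used exactly where the paper uses it (to make every minimal prime contribute to the associativity formula). The only difference is cosmetic: where the paper cites Vasconcelos for $\deg\mathcal{F}_\qq(R)\leqslant e(\qq)$, you prove it directly by comparing the Hilbert functions of $\mathcal{F}_\qq(R)$ and $G_\qq(R)$ via the surjections $\qq^k/\qq^{k+1}\twoheadrightarrow\qq^k/\mm\qq^k$, which is a valid and self-contained substitute.
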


\begin{proof}
  By the assumption on $\mathcal{F}_\qq(R)$ we have
  $\operatorname{deg.rad}(\mathcal{F}_\qq(R))\leqslant\deg
  \mathcal{F}_\qq(R)$. 
  Moreover, by \cite[Proposition 2.4]{vasconcelos_2003} (see also the proof of
  \cite[Theorem 14.14]{matsumura}) we have
  $\deg\mathcal{F}_\qq(R)\leqslant e(\qq)$ hence the assertion.
\end{proof}


\bibliographystyle{abbrv}

\end{document}